\newtheorem{theorem}{Theorem}
\newtheorem{ex}{Example}
\newtheorem{lemma}[theorem]{Lemma}
\newtheorem{prop}[theorem]{Proposition}
\newtheorem{remark}{Remark}
\newtheorem{claim}{Claim}
\newenvironment{proof-sketch}{\noindent{\bf Sketch of Proof}\hspace*{1em}}{\qed\bigskip}
\newcommand{\RR}{\mathbb R}
\newcommand{\NN}{\mathbb N}
\renewcommand{\leq}{\leqslant}
\renewcommand{\geq}{\geqslant}
\begin{document}

\title[Positive solutions for nonvariational Robin problems]{Positive solutions for nonvariational Robin problems}

\author[N.S. Papageorgiou]{Nikolaos S. Papageorgiou}
\address[N.S. Papageorgiou]{National Technical University, Department of Mathematics,
				Zografou Campus, Athens 15780, Greece}
\email{\tt npapg@math.ntua.gr}

\author[V.D. R\u{a}dulescu]{Vicen\c{t}iu D. R\u{a}dulescu}
\address[V.D. R\u{a}dulescu]{Faculty of Applied Mathematics,
AGH University of Science and Technology,
al. Mickiewicza 30, 30-059 Krak\'ow, Poland
 \& Institute of Mathematics ``Simion Stoilow" of the Romanian Academy, P.O. Box 1-764,
          014700 Bucharest, Romania}
\email{\tt vicentiu.radulescu@imar.ro}

\author[D.D. Repov\v{s}]{Du\v{s}an D. Repov\v{s}}
\address[D.D. Repov\v{s}]{Faculty of Education and Faculty of Mathematics and Physics, University of Ljubljana,
					 SI-1000 Ljubljana, Slovenia}
\email{\tt dusan.repovs@guest.arnes.si}

\keywords{Gradient dependence, pseudomonotone operator, nonlinear regularity, positive solution, nonlinear Picone's identity.\\
\phantom{aa} 2010 AMS Subject Classification: 35J60 (Primary), 35J92 (Secondary)}

\begin{abstract}
We study a nonlinear Robin problem driven by the $p$-Laplacian and with a reaction term depending on the gradient (convection term). Using the theory of nonlinear operators of monotone-type and the asymptotic analysis of a suitable perturbation of the original equation, we show the existence of a positive smooth solution.
\end{abstract}

\maketitle

\section{Introduction}

Let $\Omega\subseteq\RR^N$ be a bounded domain with a $C^2$-boundary $\partial\Omega$. In this paper we deal with the following nonlinear Robin problem with gradient dependence:
\begin{equation}\label{eq1}
	\left\{\begin{array}{ll}
		-\Delta_pu(z)=f(z,u(z),Du(z))& \mbox{in}\ \Omega,\\
		\frac{\partial u}{\partial n_p}+\beta(z)|u|^{p-2}u=0&\mbox{on}\ \partial\Omega.
	\end{array}\right\}
\end{equation}

In this problem, $\Delta_p$ denotes the $p$-Laplacian differential operator defined by
$$\Delta_pu={\rm div}\,(|Du|^{p-2}Du)\ \mbox{for all}\ u\in W^{1,p}(\Omega),\ 1<p<\infty.$$

The reaction term $f(z,x,y)$ is gradient dependent (a convection term) and it is a  Ca\-ra\-th\'e\-o\-do\-ry function (that is, for all $(x,y)\in\RR\times\RR^N$ the mapping $z\mapsto f(z,x,y)$ is measurable and for almost all $z\in\Omega$ the map $(x,y)\mapsto f(z,x,y)$ is continuous). In the boundary condition, $\frac{\partial u}{\partial n_p}$ denotes the conormal derivative defined by extension of the map
$$C^1(\overline{\Omega})\ni u\mapsto |Du|^{p-2}\frac{\partial u}{\partial n}=|Du|^{p-2}(Du,n)_{\RR^N},$$
with $n(\cdot)$ being the outward unit normal on $\partial\Omega$. The boundary coefficient $\beta(\cdot)$ is nonnegative and it can be identically zero, in which case we recover the Neumann problem.

We are looking for positive solutions of problem (\ref{eq1}). The presence of the gradient in the reaction term precludes the use of variational methods. In this paper, our approach is based on the nonlinear operator theory and on the asymptotic analysis of a perturbation of problem (\ref{eq1}).

Positive solutions for elliptic problems with convection were obtained by de Figueiredo, Girardi and Matzeu \cite{3}, Girardi and Matzeu \cite{6} (semilinear equations driven by the Dirichlet Laplacian),  Ruiz \cite{13}, Faraci, Motreanu and Puglisi \cite{2}, and Huy, Quan and Khanh \cite{7} (nonlinear Dirichlet problems). For Neumann problems we refer to the works of Gasinski and Papageorgiou \cite{5}, and Papageorgiou, R\u adulescu and Repov\v{s} \cite{12}, where the differential operator is of the form ${\rm div}\,(a(u)Du)$. In all the above papers, the method of proof is different and 
 is based either on the fixed point theory (the Leray-Schauder alternative principle), on the iterative techniques, or on the method of upper-lower solutions.

\section{Mathematical Background and Hypotheses}

Let $X$ be a reflexive Banach space. We denote by $X^*$ its topological dual and by $\left\langle \cdot,\cdot\right\rangle$ the duality brackets for the dual pair $(X,X^*)$. Suppose that $V:X\rightarrow X^*$ is a nonlinear operator which is bounded (that is, it maps bounded sets to bounded sets) and everywhere defined. We say that $V(\cdot)$ is ``pseudomonotone", if the following property holds:
\begin{eqnarray*}
	&&u_n\stackrel{w}{\rightarrow}u\ \mbox{in}\ X,V(u_n)\stackrel{w}{\rightarrow}u^*\ \mbox{in}\ X^*\ \mbox{and}\ \limsup\limits_{n\rightarrow\infty}\left\langle V(u_n),u_n-u\right\rangle\leq 0\\
	&&\hspace{4cm}\Downarrow\\
	&&\hspace{1cm}u^*=V(u)\ \mbox{and}\ \left\langle V(u_n),u_n\right\rangle\rightarrow\left\langle V(u),u\right\rangle.
\end{eqnarray*}

Pseudomonotonicity is preserved by addition and any maximal monotone everywhere defined operator is pseudomonotone. Moreover, as is the case of maximal operators, pseudomonotone maps exhibit remarkable surjectivity properties.

\begin{prop}\label{prop1}
	If $V:X\rightarrow X^*$ is pseudomonotone and strongly coercive (that is,\\ $\frac{\left\langle V(u),u\right\rangle}{||u||} \rightarrow+\infty$ as $||u||\rightarrow\infty$), then $V$ is surjective.
\end{prop}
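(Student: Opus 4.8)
The plan is to establish surjectivity of a pseudomonotone, strongly coercive operator $V$ via a Galerkin-type approximation, using the fact that in finite dimensions surjectivity of a continuous coercive map follows from a standard degree/Brouwer argument, and then passing to the limit using pseudomonotonicity. Fix $h\in X^*$; I want to find $u\in X$ with $V(u)=h$.

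First I would reduce to the case $h=0$ by replacing $V$ with $u\mapsto V(u)-h$, which is still bounded, everywhere defined, pseudomonotone (pseudomonotonicity is stable under translation by a fixed element of $X^*$), and still strongly coercive since $\left\langle V(u)-h,u\right\rangle/\|u\| \geq \left\langle V(u),u\right\rangle/\|u\| - \|h\|_* \to +\infty$. Next, since $X$ is reflexive and separable (separability can be arranged by restricting to the closed span of a countable dense set, or is assumed in context), choose an increasing sequence of finite-dimensional subspaces $X_n\subseteq X$ with $\overline{\bigcup_n X_n}=X$, and let $i_n:X_n\to X$ be the inclusion and $i_n^*:X^*\to X_n^*$ its adjoint. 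Consider the finite-dimensional maps $V_n:=i_n^*\circ V\circ i_n:X_n\to X_n^*$. By strong coercivity, there is $R>0$ such that $\left\langle V(u),u\right\rangle>0$ for all $\|u\|=R$; hence $\left\langle V_n(v),v\right\rangle>0$ for all $v\in X_n$ with $\|v\|=R$. Continuity of $V_n$ (which I would need to check — it follows from the Carathéodory/demicontinuity type properties implied by pseudomonotonicity, or one argues that a pseudomonotone bounded operator is demicontinuous, hence its finite-dimensional compressions are continuous) together with this sign condition gives, by a Brouwer fixed point / topological degree argument applied on the ball $\overline{B}_R\cap X_n$, a point $u_n\in X_n$ with $\|u_n\|\leq R$ and $V_n(u_n)=0$, i.e. $\left\langle V(u_n),v\right\rangle=0$ for all $v\in X_n$.

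Now I pass to the limit. The sequence $(u_n)$ is bounded in $X$, so by reflexivity, along a subsequence $u_n\stackrel{w}{\to}u$ in $X$; since $V$ is bounded, $(V(u_n))$ is bounded in $X^*$, so along a further subsequence $V(u_n)\stackrel{w}{\to}u^*$ in $X^*$. For fixed $m$ and all $n\geq m$ we have $\left\langle V(u_n),v\right\rangle=0$ for $v\in X_m$; letting $n\to\infty$ gives $\left\langle u^*,v\right\rangle=0$ for all $v\in\bigcup_m X_m$, hence $u^*=0$ by density. To apply the pseudomonotonicity hypothesis I need $\limsup_n \left\langle V(u_n),u_n-u\right\rangle\leq 0$: write $\left\langle V(u_n),u_n-u\right\rangle = \left\langle V(u_n),u_n\right\rangle - \left\langle V(u_n),u\right\rangle$; the second term tends to $\left\langle u^*,u\right\rangle=0$, and for the first term I would approximate $u$ by $v_m\in X_m$ with $v_m\to u$, use $\left\langle V(u_n),u_n\right\rangle=\left\langle V(u_n),u_n-v_m\right\rangle$ for $n\geq m$ is not quite zero — rather, use $\left\langle V(u_n),u_n\right\rangle=0$ is false in general; instead note $\left\langle V(u_n),u_n\right\rangle = \left\langle V(u_n),u_n - v_m\right\rangle$ requires $\left\langle V(u_n),v_m\right\rangle=0$, which holds once $n\geq m$. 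Then $\left\langle V(u_n),u_n-u\right\rangle=\left\langle V(u_n),v_m-u\right\rangle + \left\langle V(u_n),u_n-v_m\right\rangle = \left\langle V(u_n),v_m-u\right\rangle - \left\langle V(u_n),u\right\rangle + \left\langle V(u_n),u_n\right\rangle$, and since the middle reasoning is getting circular, the clean route is: $\left\langle V(u_n),u_n-u\right\rangle=\left\langle V(u_n),v_m-u\right\rangle$ for $n\geq m$ because $\left\langle V(u_n),u_n-v_m\right\rangle=0$; then $|\left\langle V(u_n),v_m-u\right\rangle|\leq \sup_n\|V(u_n)\|_*\,\|v_m-u\|$, so $\limsup_n\left\langle V(u_n),u_n-u\right\rangle\leq C\|v_m-u\|$, and letting $m\to\infty$ yields $\limsup_n\left\langle V(u_n),u_n-u\right\rangle\leq 0$. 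Pseudomonotonicity then gives $u^*=V(u)$, i.e. $V(u)=0$, completing the proof.

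The main obstacle is the second paragraph: producing the finite-dimensional solutions $u_n$ rigorously. This requires knowing that $V$ (hence each $V_n$) is continuous from $X_n$ with its finite-dimensional topology into $X_n^*$ — equivalently, that a bounded pseudomonotone operator is demicontinuous — which is a standard but nontrivial lemma one should either cite or prove, and then invoking Brouwer's theorem (or the computation of the Brouwer degree of $V_n$ on $B_R$, which is nonzero by the coercivity sign condition via a homotopy to the identity). The limit passage in the third paragraph is routine once pseudomonotonicity is available; the only care needed is the bookkeeping with the approximating elements $v_m\in X_m$, $v_m\to u$, as sketched above.
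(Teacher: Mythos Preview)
The paper does not actually prove this proposition: it is stated in the ``Mathematical Background and Hypotheses'' section as a known result (the standard surjectivity theorem for pseudomonotone coercive operators, found e.g.\ in Gasinski--Papageorgiou \cite{4}), and is simply invoked later. So there is no authors' proof to compare against.

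Your Galerkin argument is the standard textbook route and is essentially correct, but two remarks are in order. First, a small slip: you write that ``$\langle V(u_n),u_n\rangle=0$ is false in general'', but in fact it \emph{is} zero, since $u_n\in X_n$ and $V_n(u_n)=0$ means $\langle V(u_n),v\rangle=0$ for every $v\in X_n$, in particular for $v=u_n$. This would have spared you the detour; nonetheless your ``clean route'' via $\langle V(u_n),u_n-v_m\rangle=0$ (valid because $u_n-v_m\in X_n$ for $n\geq m$) is also correct and leads to $\lim_n\langle V(u_n),u_n-u\rangle=0$. Second, you rightly flag the two genuine technical prerequisites: (a) separability of $X$, which is needed for the increasing exhaustion by finite-dimensional subspaces (and holds in the paper's Sobolev-space setting), and (b) demicontinuity of $V$, so that each $V_n$ is continuous and Brouwer's theorem applies. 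The implication ``bounded $+$ pseudomonotone $\Rightarrow$ demicontinuous'' is a standard lemma that you would indeed need to cite or prove; once that is in hand your argument goes through.
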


From the above remarks we see that if $A:X\rightarrow X^*$ is maximal monotone everywhere defined and $K:X\rightarrow X^*$ is completely continuous (that is, if $u_n\stackrel{w}{\rightarrow}u$ in $X$, then $K(u_n)\rightarrow K(u)$ in $X^*$), then $u\rightarrow V(u)=A(u)+K(u)$ is pseudomonotone.

A nonlinear operator $A:X\rightarrow X^*$ is said to be of type $(S)_+$, if the following property holds:
$$u_n\stackrel{w}{\rightarrow}u\ \mbox{in}\ X\ \mbox{and}\ \limsup\limits_{n\rightarrow\infty}\left\langle A(u_n),u_n-u\right\rangle\leq 0\Rightarrow u_n\rightarrow u\ \mbox{in}\ X.$$

For further details on these notions and related issues, we refer to Gasinski and Papageorgiou \cite{4}.

In the analysis of problem (\ref{eq1}) we will use the Sobolev space $W^{1,p}(\Omega)$, the Banach space $C^1(\overline{\Omega})$ and the boundary Lebesgue space $L^p(\partial\Omega)$.

We denote by $||\cdot||$  the norm of the Sobolev space $W^{1,p}(\Omega)$ defined by
$$||u||=[||u||^p_p+||Du||^p_p]^{1/p}\ \mbox{for all}\ u\in W^{1,p}(\Omega).$$

The Banach space $C^1(\overline{\Omega})$ is an ordered Banach space with positive (order) cone defined by
$$C_+=\{u\in C^1(\overline{\Omega}):u(z)\geq 0\ \mbox{for all}\ z\in\overline{\Omega}\}.$$

This cone has a nonempty interior given by
$${\rm int}\, C_+=\{u\in C_+:u(z)>0\ \mbox{for all}\ z\in\Omega,\left.\frac{\partial u}{\partial n}\right|_{\partial\Omega\cap u^{-1}(0)}<0\ \mbox{if}\ \partial\Omega\cap u^{-1}(0)\neq\emptyset\}.$$

This interior contains the open set
$$D_+=\{u\in C_+:u(z)>0\ \mbox{for all}\ z\in\overline{\Omega}\}.$$

In fact, $D_+$ is the interior of $C_+$ when $C^1(\overline{\Omega})$ is endowed with the $C(\overline{\Omega})$-norm topology.

On $\partial\Omega$ we consider the $(N-1)$-dimensional Hausdorff (surface) measure $\sigma(\cdot)$. Using this measure on $\partial\Omega$ we can define in the usual way the ``boundary" Lebesgue spaces $L^q(\Omega)$ ($1\leq q\leq\infty$). From the theory of Sobolev spaces, we know that there exists a unique continuous linear map $\gamma_0:W^{1,p}(\Omega)\rightarrow L^p(\partial\Omega)$, known as the ``trace map", such that $\gamma_0(u)=u|_{\partial\Omega}$ for all $u\in W^{1,p}(\Omega)\cap C(\overline{\Omega})$. So, the trace operator extends the notion of ``boundary values" to all Sobolev functions. We have
$${\rm im}\,\gamma_0= W^{\frac{1}{p'},p}(\partial\Omega)\left(\frac{1}{p}+\frac{1}{p'}=1\right)\ \mbox{and}\ {\rm ker}\,\gamma_0=W^{1,p}_0(\Omega).$$

The trace map is compact into $L^q(\partial\Omega)$ for all $q\in\left[1,\frac{(N-1)p}{N-p}\right)$ if $p<N$ and for all $q\geq 1$ if $N\leq p$. In the sequel, for the sake of notational simplicity, we drop the use of the trace map $\gamma_0$. All restrictions of Sobolev functions on $\partial\Omega$ are understood in the sense of traces.

Let $A:W^{1,p}(\Omega)\rightarrow W^{1,p}(\Omega)^*$ be the nonlinear operator defined by
$$\left\langle A(u),h\right\rangle=\int_{\Omega}|Du|^{p-2}(Du,Dh)_{\RR^N}dz\ \mbox{for all}\ u,h\in W^{1,p}(\Omega).$$
\begin{prop}\label{prop2}
	The operator $A:W^{1,p}(\Omega)\rightarrow W^{1,p}(\Omega)^*$ is bounded, continuous, monotone (hence also maximal monotone) and of type $(S)_+$.
\end{prop}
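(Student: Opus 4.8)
The plan is to establish each of the four properties of $A$ in turn, since they build on one another.

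First I would verify boundedness. For $u,h\in W^{1,p}(\Omega)$, H\"older's inequality gives $|\langle A(u),h\rangle|\le\int_\Omega|Du|^{p-1}|Dh|\,dz\le\||Du|^{p-1}\|_{p'}\|Dh\|_p=\|Du\|_p^{p-1}\|Dh\|_p\le\|u\|^{p-1}\|h\|$, so $\|A(u)\|_*\le\|u\|^{p-1}$ and $A$ maps bounded sets to bounded sets.

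Next, continuity. Suppose $u_n\to u$ in $W^{1,p}(\Omega)$. Then $Du_n\to Du$ in $L^p(\Omega;\RR^N)$, so along a subsequence $Du_n\to Du$ pointwise a.e. and $|Du_n|\le g$ for some $g\in L^p(\Omega)$. The map $\xi\mapsto|\xi|^{p-2}\xi$ is continuous on $\RR^N$, hence $|Du_n|^{p-2}Du_n\to|Du|^{p-2}Du$ a.e., with domination $|\,|Du_n|^{p-2}Du_n\,|\le g^{p-1}\in L^{p'}(\Omega)$; by the dominated convergence theorem $|Du_n|^{p-2}Du_n\to|Du|^{p-2}Du$ in $L^{p'}(\Omega;\RR^N)$, and since this limit is independent of the subsequence the whole sequence converges. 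Testing against arbitrary $h$ and using H\"older gives $A(u_n)\to A(u)$ in $W^{1,p}(\Omega)^*$.

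For monotonicity I would invoke the elementary vector inequality $(|\xi|^{p-2}\xi-|\eta|^{p-2}\eta,\xi-\eta)_{\RR^N}\ge 0$ for all $\xi,\eta\in\RR^N$ (with equality only when $\xi=\eta$), which holds since $\xi\mapsto|\xi|^{p-2}\xi$ is the gradient of the convex function $\xi\mapsto\frac1p|\xi|^p$. Applying this pointwise with $\xi=Du(z)$, $\eta=Dh(z)$ and integrating yields $\langle A(u)-A(h),u-h\rangle\ge 0$. A continuous monotone operator is demicontinuous, hence maximal monotone (a continuous everywhere-defined monotone operator on a Banach space is maximal monotone).

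Finally, the $(S)_+$ property, which I expect to be the only step requiring any real care. Assume $u_n\stackrel{w}{\rightarrow}u$ in $W^{1,p}(\Omega)$ and $\limsup_n\langle A(u_n),u_n-u\rangle\le 0$. By monotonicity, $\langle A(u_n)-A(u),u_n-u\rangle\ge0$, and since $u_n-u\stackrel{w}{\rightarrow}0$ we have $\langle A(u),u_n-u\rangle\to0$; combining, $\langle A(u_n),u_n-u\rangle\to0$, i.e. $\int_\Omega|Du_n|^{p-2}(Du_n,Du_n-Du)_{\RR^N}dz\to0$. Also $\int_\Omega|Du|^{p-2}(Du,Du_n-Du)_{\RR^N}dz\to0$ since $Du_n\stackrel{w}{\rightarrow}Du$ in $L^p(\Omega;\RR^N)$. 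Subtracting gives $\int_\Omega(|Du_n|^{p-2}Du_n-|Du|^{p-2}Du,Du_n-Du)_{\RR^N}dz\to0$; the integrand is nonnegative, so it converges to $0$ in $L^1(\Omega)$, and passing to a subsequence it converges to $0$ pointwise a.e. The strict monotonicity of $\xi\mapsto|\xi|^{p-2}\xi$ then forces $Du_n(z)\to Du(z)$ a.e. To upgrade a.e. convergence to norm convergence I would use a uniform integrability / Vitali-type argument: when $p\ge2$ one has the quantitative bound $(|\xi|^{p-2}\xi-|\eta|^{p-2}\eta,\xi-\eta)\ge c_p|\xi-\eta|^p$, which directly gives $\|Du_n-Du\|_p\to0$; when $1<p<2$ one instead uses the bound $|\xi-\eta|^p\le c_p[(|\xi|^{p-2}\xi-|\eta|^{p-2}\eta,\xi-\eta)]^{p/2}(|\xi|^p+|\eta|^p)^{(2-p)/2}$ together with H\"older's inequality and the boundedness of $\|Du_n\|_p$ to conclude $\|Du_n-Du\|_p\to0$. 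Since $u_n\stackrel{w}{\rightarrow}u$ already gives $u_n\to u$ in $L^p(\Omega)$ by compactness of the embedding $W^{1,p}(\Omega)\hookrightarrow L^p(\Omega)$, we obtain $u_n\to u$ in $W^{1,p}(\Omega)$, as required. (Alternatively, one can bypass the two cases by noting that $A+\mathrm{id}$ type arguments or the Brezis splitting give $(S)_+$ directly from monotonicity plus the a.e. convergence of the gradients and weak $L^p$ convergence via a generalized dominated convergence / Fatou argument; but the case split above is the cleanest self-contained route.)
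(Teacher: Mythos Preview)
Your proof is correct and complete; each of the four properties is established by the standard argument, and the $(S)_+$ step with the case split $p\geq 2$ versus $1<p<2$ via the quantitative monotonicity inequalities is the cleanest way to handle it.

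However, there is nothing to compare against: the paper states Proposition~\ref{prop2} without proof, treating it as a well-known structural fact about the $p$-Laplacian (it is implicit that this is folklore, in the spirit of the reference to Gasinski and Papageorgiou \cite{4}). So your write-up supplies details that the authors chose to omit. If anything, you have done more than the paper itself does here.
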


Given $x\in\RR$, we define $x^{\pm}=\max\{\pm x,0\}$. Then for $u\in W^{1,p}(\Omega)$ we set $u^{\pm}(\cdot)=u(\cdot)^{\pm}$. We have
$$u^{\pm}\in W^{1,p}(\Omega),\ u=u^+-u^-,\ |u|=u^++u^-.$$

Given a measurable function $g:\Omega\times\RR\times\RR^N\rightarrow\RR$ (for example, a Carath\'eodory function), we denote by $N_g(\cdot)$ the Nemitsky (superposition) map defined by
$$N_g(u)(\cdot)=g(\cdot,u(\cdot),Du(\cdot))\ \mbox{for all}\ u\in W^{1,p}(\Omega).$$

Evidently, $z\mapsto N_g(u)(z)$ is measurable. We denote by $|\cdot|_N$ the Lebesgue measure on $\RR^N$.

Consider the following nonlinear eigenvalue problem
\begin{equation}\label{eq2}
	\left\{\begin{array}{ll}
		-\Delta_p u(z)=\hat{\lambda}|u(z)|^{p-2}u(z)&\mbox{in}\ \Omega,\\
		\frac{\partial u}{\partial n_p}+\beta(z)|u|^{p-2}u=0&\mbox{on}\ \partial\Omega.
	\end{array}\right\}
\end{equation}

We make the following hypothesis concerning the boundary coefficient $\beta(\cdot)$:

\smallskip
$H(\beta):$ $\beta\in C^{0,\alpha}(\partial\Omega)$ with $\alpha\in(0,1)$ and $\beta(z)\geq 0$ for all $z\in\partial\Omega$.
\begin{remark}
	If $\beta\equiv 0$, then we recover the Neumann boundary condition.
\end{remark}

An ``eigenvalue" is a real number $\hat{\lambda}$ for which problem (\ref{eq2}) admits a nontrivial solution $\hat{u}\in W^{1,p}(\Omega)$, known as the ``eigenfunction" corresponding to the eigenvalue $\hat{\lambda}$. From Papageorgiou and R\u adulescu \cite{11} (see also Winkert \cite{14}), we have that
$$\hat{u}\in L^{\infty}(\Omega).$$

So, we can apply Theorem 2 of Lieberman \cite{8} and infer that
$$\hat{u}\in C^1(\overline{\Omega}).$$

From Papageorgiou and R\u adulescu \cite{10} we know that problem (\ref{eq2}) admits a smallest eigenvalue $\hat{\lambda}_1\in\RR$
 with the following properties:
\begin{itemize}
	\item $\hat{\lambda}_1\geq 0$, in fact $\hat{\lambda}_1=0$ if $\beta\equiv 0$ (Neumann problem) and $\hat{\lambda}_1>0$ if $\beta\not\equiv 0$.
	\item $\hat{\lambda}_1$ is isolated in the spectrum $\hat{\sigma}(p)$ of (\ref{eq2}) (that is, we can find $\epsilon>0$ such that $(\hat{\lambda}_1,\hat{\lambda}_1+\epsilon)\cap\hat{\sigma}(p)=\emptyset$).
	\item $\hat{\lambda}_1$ is simple (that is, if $\hat{u},\hat{v}\in C^1(\overline{\Omega})$ are eigenfunctions corresponding to $\hat{\lambda}_1$, then $\hat{u}=\xi\hat{v}$ for some $\xi\in\RR\backslash\{0\}$).
	\begin{equation}\label{eq3}
		\bullet\  \hat{\lambda}_1=\inf{\Bigg\{}\frac{||Du||^p_p+\int_{\partial\Omega}\beta(z)|u|^{p}d\sigma}{||u||^p_p}:u\in W^{1,p}(\Omega),u\neq 0{\Bigg\}}.\hspace{3.2cm}
	\end{equation}
\end{itemize}

The infimum in (\ref{eq3}) is realized on the corresponding one-dimensional eigenspace. From the above property it follows that the elements of this eigenspace do not change sign. Let $\hat{u}_1$ be the $L^p$-normalized (that is, $||\hat{u}_1||_p=1$) positive eigenfunction corresponding to $\hat{\lambda}_1$. We know that $\hat{u}_1\in C_+$. In fact, the nonlinear strong maximum principle (see, for example, Gasinski and Papageorgiou \cite[p. 738]{4}), implies that $\hat{u}_1\in D_+$. An eigenfunction $\hat{u}$ corresponding to an eigenvalue $\hat{\lambda}\not=\hat{\lambda}_1$, is necessary nodal (that is, sign changing). For more on the spectrum of (\ref{eq2}) we refer to Papageorgiou and R\u adulescu \cite{11}. The next lemma is an easy consequence of the above properties of the eigenpair $(\hat{\lambda}_1,\hat{u}_1)$ (see Mugnai and Papageorgiou \cite[Lemma 4.11]{9}).
\begin{lemma}\label{lem3}
	If $\vartheta\in L^{\infty}(\Omega),\ \vartheta(z)\leq\hat{\lambda}_1$ for almost all $z\in\Omega$, $\vartheta\not\equiv \hat{\lambda}_1$, then there exists $c_0>0$ such that
	$$||Du||^p_p+\int_{\partial\Omega}\beta(z)|u|^pd\sigma-\int_{\Omega}\vartheta(z)|u|^pdz\geq c_0||u||^p$$
	for all $u\in W^{1,p}(\Omega).$
\end{lemma}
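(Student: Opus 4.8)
The plan is to argue by contradiction. Suppose the claimed inequality fails. Then for every $n\in\NN$ there exists $u_n\in W^{1,p}(\Omega)$ with
$$\|Du_n\|_p^p+\int_{\partial\Omega}\beta(z)|u_n|^p\,d\sigma-\int_\Omega\vartheta(z)|u_n|^p\,dz<\frac{1}{n}\|u_n\|^p.$$
By homogeneity we may normalize $\|u_n\|=1$ for all $n$. Then, passing to a subsequence, $u_n\stackrel{w}{\rightarrow}u$ in $W^{1,p}(\Omega)$, and by the compactness of the embedding $W^{1,p}(\Omega)\hookrightarrow L^p(\Omega)$ and of the trace map into $L^p(\partial\Omega)$ we get $u_n\to u$ in $L^p(\Omega)$ and in $L^p(\partial\Omega)$.

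Next I would pass to the limit. Since $\|Du_n\|_p^p\le\frac1n+\int_\Omega\vartheta|u_n|^p-\int_{\partial\Omega}\beta|u_n|^p\to\int_\Omega\vartheta|u|^p-\int_{\partial\Omega}\beta|u|^p$, the quantity $\liminf_n\|Du_n\|_p^p$ is finite; combined with weak lower semicontinuity of $u\mapsto\|Du\|_p^p$ on $W^{1,p}(\Omega)$ we obtain
$$\|Du\|_p^p+\int_{\partial\Omega}\beta(z)|u|^p\,d\sigma-\int_\Omega\vartheta(z)|u|^p\,dz\le 0.$$
On the other hand, the variational characterization \eqref{eq3} of $\hat\lambda_1$ together with $\vartheta(z)\le\hat\lambda_1$ a.e.\ gives
$$\|Du\|_p^p+\int_{\partial\Omega}\beta(z)|u|^p\,d\sigma-\int_\Omega\vartheta(z)|u|^p\,dz\ge\|Du\|_p^p+\int_{\partial\Omega}\beta(z)|u|^p\,d\sigma-\hat\lambda_1\|u\|_p^p\ge 0.$$
Hence both displayed quantities equal zero, so in particular $\|Du\|_p^p+\int_{\partial\Omega}\beta|u|^p\,d\sigma=\hat\lambda_1\|u\|_p^p$, i.e.\ $u$ realizes the infimum in \eqref{eq3}. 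Two cases arise. If $u=0$, then $\int_{\partial\Omega}\beta|u|^p\,d\sigma=0$ and $\int_\Omega\vartheta|u|^p\,dz=0$, so from the normalization inequality $\|Du_n\|_p^p\to 0$; together with $\|u_n\|_p\to\|u\|_p=0$ this forces $\|u_n\|\to 0$, contradicting $\|u_n\|=1$. If $u\neq 0$, then $u$ is (up to scalar) the principal eigenfunction, so $u\in D_+\cup(-D_+)$ and in particular $|u(z)|>0$ for a.e.\ $z\in\Omega$; but then, since $\vartheta\not\equiv\hat\lambda_1$ and $\vartheta\le\hat\lambda_1$, we get the strict inequality $\int_\Omega\vartheta(z)|u|^p\,dz<\hat\lambda_1\|u\|_p^p$, whence
$$\|Du\|_p^p+\int_{\partial\Omega}\beta(z)|u|^p\,d\sigma-\int_\Omega\vartheta(z)|u|^p\,dz>\|Du\|_p^p+\int_{\partial\Omega}\beta(z)|u|^p\,d\sigma-\hat\lambda_1\|u\|_p^p\ge 0,$$
contradicting that this quantity is zero.

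In both cases we reach a contradiction, so the desired $c_0>0$ exists. The main obstacle is the case $u\neq 0$: one must know that the minimizer $u$ in \eqref{eq3} does not vanish on a set of positive measure, which is exactly where the simplicity of $\hat\lambda_1$ and the strong maximum principle (giving $\hat u_1\in D_+$) enter; without this, the strict inequality $\int_\Omega\vartheta|u|^p\,dz<\hat\lambda_1\|u\|_p^p$ could fail even though $\vartheta\not\equiv\hat\lambda_1$. Everything else is a routine compactness-and-lower-semicontinuity argument.
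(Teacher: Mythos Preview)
Your argument is correct and is exactly the standard proof of this lemma. Note that the paper does not actually supply its own proof here: it merely says that the result ``is an easy consequence of the above properties of the eigenpair $(\hat\lambda_1,\hat u_1)$'' and refers to Mugnai and Papageorgiou \cite[Lemma~4.11]{9}, whose proof is precisely the normalization/compactness/weak-lower-semicontinuity contradiction you wrote out, hinging on $\hat u_1\in D_+$ to force $\int_\Omega(\hat\lambda_1-\vartheta)|u|^p\,dz>0$ when $u\neq 0$.
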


Our hypotheses on the reaction term $f(z,x,y)$ are the following:

\smallskip
$H(f):$ $f:\Omega\times\RR\times\RR^N\rightarrow\RR$ is a Carath\'eodory function such that $f(z,0,y)=0$ for almost all $z\in\Omega$, for all $y\in\RR^N$, and
\begin{itemize}
	\item[(i)] $|f(z,x,y)|\leq a(z)[1+x^{p-1}+|y|^{p-1}]$ for almost all $z\in\Omega$, all $x\geq 0$, all $y\in\RR^N$, with $a\in L^{\infty}(\Omega)$;
	\item[(ii)] there exists a function $\vartheta\in L^{\infty}(\Omega)$ such that
	\begin{eqnarray*}
		&&\vartheta(z)\leq\hat{\lambda}_1\ \mbox{for almost all}\ z\in\Omega,\ \vartheta\not\equiv \hat{\lambda}_1,\\
		&&\limsup\limits_{x\rightarrow+\infty}\frac{f(z,x,y)}{x^{p-1}}\leq\vartheta(z)\ \mbox{uniformly for almost all}\ z\in\Omega,\ \mbox{and all}\ y\in\RR^N;
	\end{eqnarray*}
	\item[(iii)] for every $M>0$, there exists $\eta_M\in L^{\infty}(\Omega)$ such that $\eta_M(z)\geq\hat{\lambda}_1$ almost everywhere in $\Omega$, $\eta_M\not\equiv\hat{\lambda}_1$ and
	$$\liminf\limits_{x\rightarrow 0^+}\frac{f(z,x,y)}{x^{p-1}}\geq\eta_M(z)\ \mbox{uniformly for almost all}\ z\in\Omega,\ \mbox{and all}\ |y|\leq M.$$
\end{itemize}
\begin{remark}
	Since we are looking for positive solutions and the above hypotheses concern only the positive semiaxis $\RR_+=\left[0,+\infty\right)$, we may assume without loss of generality  that
	\begin{equation}\label{eq4}
		f(z,x,y)=0\ \mbox{for almost all}\ z\in\Omega,\ \mbox{all}\ x\leq 0, \ \mbox{and all}\ y\in\RR^N.
	\end{equation}
\end{remark}
\begin{ex}
	The following function satisfies hypotheses $H(f)$ (for the sake of simplicity we drop the $z$-dependence):
	$$f(x,y)=\left\{\begin{array}{ll}
		\eta x^{p-1}+x^{r-1}|y|^{p-1}&\mbox{if}\ 0\leq x\leq 1\\
		\vartheta x^{p-1}+(\eta-\vartheta)x^{q-1}+x^{\tau-1}|y|^{p-1}&\mbox{if}\ 1<x
	\end{array}\right.$$
	with $1<\tau,q<p<r<\infty$ and $\vartheta<\hat{\lambda}_1<\eta$.
\end{ex}

\section{Positive solution}

We introduce the following perturbation of $f(z,x,y)$:
$$\hat{f}(z,x,y)=f(z,x,y)+(x^+)^{p-1}.$$

Also, let $\epsilon>0$ and $e\in D_+$. We consider the following auxiliary Robin problem:
\begin{equation}\label{eq5}
	\left\{\begin{array}{ll}
		-\Delta_pu(z)+|u(z)|^{p-2}u(z)=\hat{f}(z,u(z),Du(z))+\epsilon e(z)&\mbox{in}\ \Omega,\\
		\frac{\partial u}{\partial n_p}+\beta(z)|u|^{p-2}u=0&\mbox{on}\ \partial\Omega.
	\end{array}\right\}
\end{equation}
\begin{prop}\label{prop4}
	If hypotheses $H(\beta),H(f)$ hold and $\epsilon>0$, then problem (\ref{eq5}) has a solution $u_{\epsilon}\in D_+$.
\end{prop}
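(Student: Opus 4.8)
The plan is to solve the auxiliary problem \eqref{eq5} by a surjectivity argument based on Proposition \ref{prop1}, and then to upgrade the abstract solution to an element of $D_+$ using positivity and nonlinear regularity. First I would set up the operator-theoretic framework: on $X=W^{1,p}(\Omega)$ define $V_\epsilon:X\to X^*$ by
$$\langle V_\epsilon(u),h\rangle=\langle A(u),h\rangle+\int_\Omega |u|^{p-2}uh\,dz+\int_{\partial\Omega}\beta(z)|u|^{p-2}uh\,d\sigma-\langle N_{\hat f}(u),h\rangle-\epsilon\int_\Omega eh\,dz,$$
for all $h\in X$. The first three terms give a maximal monotone, everywhere defined, bounded operator (Proposition \ref{prop2} for $A$, plus the obvious monotone continuous contributions from the $L^p$-term and the boundary term, using $H(\beta)$ and the compactness of the trace). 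The term $u\mapsto N_{\hat f}(u)$, as a map $X\to X^*$, is completely continuous: by the growth condition $H(f)(i)$ (together with $\hat f(z,x,y)=f(z,x,y)+(x^+)^{p-1}$, which keeps the same $(p-1)$-growth) the Nemytskii map is bounded from $L^p(\Omega)\times L^p(\Omega,\RR^N)$ into $L^{p'}(\Omega)$; if $u_n\xrightarrow{w}u$ in $X$ then $u_n\to u$ in $L^p(\Omega)$ (Rellich--Kondrachov) and $Du_n\xrightarrow{w}Du$ in $L^p(\Omega,\RR^N)$, but here one needs strong convergence of the gradient to pass to the limit in the $|y|^{p-1}$ part — so the complete continuity of $N_{\hat f}$ is not automatic. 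The standard way around this is to use that $A$ is of type $(S)_+$: the whole operator $V_\epsilon$ is then seen to be pseudomonotone directly (a weakly convergent sequence with $\limsup\langle V_\epsilon(u_n),u_n-u\rangle\le0$ forces $u_n\to u$ in $X$ by $(S)_+$, and from there everything passes to the limit). This is the point that requires the most care.

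Next I would verify strong coercivity of $V_\epsilon$. Using hypothesis $H(f)(ii)$, for any $\delta>0$ there is $c_\delta>0$ with $f(z,x,y)\le(\vartheta(z)+\delta)(x^+)^{p-1}+c_\delta$ for a.a.\ $z$, all $x$, all $y$ (the upper bound being vacuous for $x\le0$ by \eqref{eq4}), hence
$$\langle V_\epsilon(u),u\rangle\ \ge\ \|Du\|_p^p+\|u\|_p^p+\int_{\partial\Omega}\beta|u|^p\,d\sigma-\int_\Omega(\vartheta+\delta)(u^+)^p\,dz-c_\delta'\|u\|.$$
Splitting $u=u^+-u^-$ and applying Lemma \ref{lem3} to $u^+$ (choosing $\delta$ small enough that $\vartheta+\delta\le\hat\lambda_1$ still holds with strict inequality on a set of positive measure — this is where $\vartheta\not\equiv\hat\lambda_1$ is used) yields $\langle V_\epsilon(u),u\rangle\ge c_1\|u^+\|^p+\|u^-\|_p^p+\|Du^-\|_p^p-c_\delta'\|u\|\ge c_2\|u\|^p-c_3\|u\|$, so $\langle V_\epsilon(u),u\rangle/\|u\|\to+\infty$. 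Proposition \ref{prop1} then gives $u_\epsilon\in X$ with $V_\epsilon(u_\epsilon)=0$, i.e.\ a weak solution of \eqref{eq5}.

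Finally I would establish $u_\epsilon\in D_+$. Acting on \eqref{eq5} with the test function $-u_\epsilon^-\in W^{1,p}(\Omega)$ and using $\hat f(z,x,y)=0$ for $x\le0$ (by \eqref{eq4}) together with $\beta\ge0$ gives $\|Du_\epsilon^-\|_p^p+\|u_\epsilon^-\|_p^p\le\epsilon\int_\Omega e\,(-u_\epsilon^-)\,dz\le0$ since $e\in D_+$, forcing $u_\epsilon^-=0$, so $u_\epsilon\ge0$ and $u_\epsilon\not\equiv0$ (because $\epsilon e\not\equiv0$). By $H(f)(i)$ the right-hand side $\hat f(\cdot,u_\epsilon,Du_\epsilon)+\epsilon e$ lies in $L^\infty(\Omega)$ once we know $u_\epsilon\in L^\infty(\Omega)$; the $L^\infty$-bound itself comes from the Moser-type iteration / the results of Papageorgiou--R\u adulescu \cite{11} and Winkert \cite{14} (the $(p-1)$-growth in $u$ and $Du$ is exactly what is needed), and then Lieberman's regularity theorem \cite{8} (Theorem 2) gives $u_\epsilon\in C^1(\overline\Omega)$, i.e.\ $u_\epsilon\in C_+\setminus\{0\}$. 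To conclude $u_\epsilon\in D_+$, rewrite the equation as $\Delta_p u_\epsilon\le\|u_\epsilon\|_\infty^{p-1}+|\hat f(\cdot,u_\epsilon,Du_\epsilon)|+\epsilon\|e\|_\infty=:c_4\in L^\infty(\Omega)$, and recalling $\hat f\ge f$ with $f(z,0,y)=0$, the nonlinear strong maximum principle of Vázquez (see Gasi\'nski--Papageorgiou \cite[p.\ 738]{4}) applied to $-\Delta_p u_\epsilon+c_4\ge0$ yields $u_\epsilon(z)>0$ for all $z\in\overline\Omega$ — note the strict positivity up to the boundary uses the Robin condition and $e\in D_+$ — hence $u_\epsilon\in D_+$.

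I expect the main obstacle to be the pseudomonotonicity / passage-to-the-limit step for the gradient-dependent Nemytskii term: because $f$ depends on $Du$, weak convergence in $W^{1,p}(\Omega)$ does not by itself let one pass to the limit in $N_{\hat f}(u_n)$, and one must exploit the $(S)_+$ property of $A$ (inherited through the lower-order coercive terms) to promote weak convergence to strong convergence of $u_n$ in $X$ before identifying the limit of $V_\epsilon(u_n)$.
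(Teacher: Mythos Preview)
Your approach is essentially the paper's: define $V_\epsilon$ on $W^{1,p}(\Omega)$, prove pseudomonotonicity by using the $(S)_+$ property of $A$ to upgrade weak to strong convergence (exactly the obstacle you flag), prove strong coercivity via $H(f)(ii)$ and Lemma~\ref{lem3}, invoke Proposition~\ref{prop1}, test with $-u_\epsilon^-$ to get positivity, and then run the $L^\infty$/Lieberman/strong-maximum-principle chain. Two small technical slips to fix, though neither affects the strategy:

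\emph{Coercivity.} You cannot apply Lemma~\ref{lem3} to the weight $\vartheta+\delta$: the hypothesis $\vartheta(z)\le\hat\lambda_1$ a.e.\ allows equality on a set of positive measure, so $\vartheta+\delta$ may exceed $\hat\lambda_1$ there and Lemma~\ref{lem3} as stated does not apply. The paper's (and the correct) move is to split $\int_\Omega(\vartheta+\delta)(u^+)^p\,dz=\int_\Omega\vartheta(u^+)^p\,dz+\delta\|u^+\|_p^p$, apply Lemma~\ref{lem3} with the original $\vartheta$ to get the gain $c_0\|u^+\|^p$, and then choose $\delta\in(0,c_0)$.

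\emph{Strong maximum principle.} The inequality $\Delta_p u_\epsilon\le c_4$ with a constant $c_4$ does not feed V\'azquez's principle; you need a bound of the form $\Delta_p u_\epsilon\le \hat\xi_M\,u_\epsilon^{p-1}$. This comes from $H(f)(i),(iii)$: for $M=\|u_\epsilon\|_{C^1(\overline\Omega)}$ there is $\hat\xi_M>0$ with $f(z,x,y)+\hat\xi_M x^{p-1}\ge0$ for a.a.\ $z$, all $0\le x\le M$, $|y|\le M$, hence $-\Delta_p u_\epsilon=f(\cdot,u_\epsilon,Du_\epsilon)+\epsilon e\ge -\hat\xi_M u_\epsilon^{p-1}$, which is exactly what the strong maximum principle needs to conclude $u_\epsilon\in D_+$.
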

\begin{proof}
	Let $N_{\hat{f}}$ be the Nemitsky map corresponding to the function $\hat{f}(z,x,y)$. We have $N_{\hat{f}}:W^{1,p}(\Omega)\rightarrow L^{p'}(\Omega)$ $\left(\frac{1}{p}+\frac{1}{p'}=1\right)$ (see hypothesis $H(f)(i)$). By Krasnoselskii's theorem (see, for example, Gasinski and Papageorgiou \cite[Theorem 3.4.4, p. 407]{4}) we deduce that
	\begin{equation}\label{eq6}
		N_{\hat{f}}(\cdot)\ \mbox{is continuous}.
	\end{equation}
	
	Also let $\psi_p:W^{1,p}(\Omega)\rightarrow L^{p'}(\Omega)$ be defined by
	$$\psi_p(u)(\cdot)=|u(\cdot)|^{p-2}u(\cdot).$$
	
	This map is bounded, continuous, monotone, hence also maximal monotone (recall that also $L^{p'}(\Omega)\hookrightarrow W^{1,p}(\Omega)^*$).
	
	Finally, let $\hat{A}:W^{1,p}(\Omega)\rightarrow W^{1,p}(\Omega)^*$ be defined by
	$$\left\langle \hat{A}(u),h\right\rangle=\left\langle  A(u),h\right\rangle+\int_{\partial\Omega}\beta(z)|u|^{p-2}uhd\sigma,$$
	where, as before,
	$$\left\langle A(u),h\right\rangle=\int_{\Omega}|Du|^{p-2}(Du,Dh)_{\RR^N}dz\quad \mbox{for all}\ u,h\in W^{1,p}(\Omega).$$
	
	Evidently, $\hat{A}(\cdot)$ is bounded, continuous, monotone, hence also maximal monotone.
	
	We introduce the operator $V:W^{1,p}(\Omega)\rightarrow W^{1,p}(\Omega)^*$ defined by
	$$V(u)=\hat{A}(u)+\psi_p(u)-N_{\hat{f}}(u)-\epsilon e.$$
	
	Clearly, $V(\cdot)$ is bounded.
	\begin{claim}
		$V(\cdot)$ is pseudomonotone.
	\end{claim}
	
	We need to show that the properties
	\begin{equation}\label{eq7}
		u_n\stackrel{ w}{\rightarrow} u\ \mbox{in}\ W^{1,p}(\Omega)\ \mbox{and}\ \limsup\limits_{n\rightarrow\infty}\left\langle V(u_n),u_n-u\right\rangle\leq 0
	\end{equation}
	imply that
	$$V(u_n)\stackrel{w}{\rightarrow} V(u)\ \mbox{in}\ W^{1,p}(\Omega)^*\ \mbox{and}\ \left\langle V(u_n),u_n\right\rangle\rightarrow\left\langle V(u),u\right\rangle.$$
	
	We have
	\begin{eqnarray}\label{eq8}
		&&\left\langle V(u_n),u_n-u\right\rangle\nonumber\\
		&=&\left\langle \hat{A}(u_n),u_n-u\right\rangle+\int_{\Omega}|u_n|^{p-2}u_n(u_n-u)dz-\int_{\Omega}\hat{f}(z,u_n,Du_n)(u_n-u)dz-\nonumber\\
		&&-\epsilon\int_{\Omega}e(u_n-u)dz.
	\end{eqnarray}
	
	Note that since $W^{1,p}(\Omega)\hookrightarrow L^p(\Omega)$ compactly, we have
	\begin{equation}\label{eq9}
		u_n\rightarrow u\ \mbox{in}\ L^p(\Omega).
	\end{equation}
	
	Also, we have
	$$\{|u_n|^{p-2}u_n\}_{n\geq 1}\subseteq L^{p'}(\Omega)\ \mbox{is bounded}.$$
	
	Hence, because of H\"older's inequality and (\ref{eq9}), we have
	\begin{equation}\label{eq10}
		\int_{\Omega}|u_n|^{p-2}u_n(u_n-u)dz\rightarrow 0\ \mbox{as}\ n\rightarrow\infty.
	\end{equation}
	
	Also, hypothesis $H(f)(i)$ implies that
	$$\{N_{\hat{f}}(u_n)\}_{n\geq 1}\subseteq L^{p'}(\Omega)\ \mbox{is bounded}.$$
	
	Therefore we also have
	\begin{equation}\label{eq11}
		\int_{\Omega}\hat{f}(z,u_n,Du_n)(u_n-u)dz\rightarrow 0\ \mbox{as}\ n\rightarrow\infty.
	\end{equation}
	
	Finally, we clearly have
	\begin{equation}\label{eq12}
		\int_{\Omega}e(u_n-u)dz\rightarrow 0\ \mbox{as}\ n\rightarrow\infty\ (\mbox{see (\ref{eq9})}).
	\end{equation}
	
	Thus, if in (\ref{eq8}) we pass to the limit as $n\rightarrow\infty$ and use (\ref{eq7}), (\ref{eq10}), (\ref{eq11}), and (\ref{eq12}) we obtain
	$$\limsup\limits_{n\rightarrow\infty}\left\langle \hat{A}(u_n),u_n-u\right\rangle\leq 0.$$
	
	By the compactness of the trace map, we have
	\begin{eqnarray*}
		&&\int_{\partial\Omega}\beta(z)|u_n|^{p-2}u_n(u_n-u)d\sigma\rightarrow 0,\\
		&\Rightarrow&\limsup\limits_{n\rightarrow\infty}\left\langle A(u_n),u_n-u\right\rangle\leq 0,\\
		&\Rightarrow&u_n\rightarrow u\ \mbox{in}\ W^{1,p}(\Omega)\ (\mbox{see Proposition \ref{prop2}}).
	\end{eqnarray*}
	
	On account of this convergence, we have
	\begin{eqnarray*}
		&&\psi_p(u_n)\rightarrow\psi_p(u)\ \mbox{and}\ N_{\hat{f}}(u_n)\rightarrow N_{\hat{f}}(u)\ \mbox{in}\ L^{p'}(\Omega)\ \mbox{as}\ n\rightarrow\infty\ \mbox{(see (\ref{eq6}))},\\
		&&\hat{A}(u_n)\rightarrow\hat{A}(u)\ \mbox{in}\ W^{1,p}(\Omega)^*\ \mbox{as}\ n\rightarrow\infty.
	\end{eqnarray*}
	
	So, we can finally assert
	 that
	\begin{eqnarray*}
		&&V(u_n)\rightarrow V(u)\ \mbox{in}\ W^{1,p}(\Omega)^*\ \mbox{and}\ \left\langle V(u_n),u_n\right\rangle\rightarrow\left\langle V(u),u\right\rangle,\\
		&\Rightarrow&V(\cdot)\ \mbox{is pseudomonotone}.
	\end{eqnarray*}
	
	This proves the claim.
	
	For all $u\in W^{1,p}(\Omega)$ we have
	\begin{eqnarray}\label{eq13}
		&&\left\langle V(u),u\right\rangle\nonumber\\ &=&||Du||^p_p+\int_{\partial\Omega}\beta(z)|u|^pd\sigma+||u^-||^p_p-\int_{\Omega}f(z,u,Du)udz-
\epsilon\int_{\Omega}eudz.
	\end{eqnarray}
	
	Hypotheses $H(f)(i),(ii)$ imply that given $\epsilon>0$, we can find $c_1=c_1(\epsilon)>0$ such that
	\begin{equation}\label{eq14}
		f(z,x,y)x\leq(\vartheta(z)+\epsilon)x^p+c_1\ \mbox{for almost all}\ z\in\Omega,\ \mbox{all}\ x\geq 0,\ \mbox{and all}\ y\in\RR^N.
	\end{equation}
	
  Using (\ref{eq14}) in (\ref{eq13}), we obtain
	\begin{eqnarray*}
		&&\left\langle V(u),u\right\rangle\\
		&\geq&||Du^-||^p_p+||u^-||^p_p+||Du^+||^p_p+\int_{\partial\Omega}\beta(z)(u^+)^pd\sigma-\int_{\Omega}\vartheta(z)(u^+)^pdz-\epsilon||u^+||^p\\
		&&-c_2||u||-c_1|\Omega|_N\ \mbox{for some}\ c_2>0,\\
		&\Rightarrow&\left\langle V(u),u\right\rangle\geq||u^-||^p+(c_0-\epsilon)||u^+||^p-c_2||u||-c_1|\Omega|_N\ (\mbox{see Lemma \ref{lem3}}).
	\end{eqnarray*}
	
	Choosing $\epsilon\in(0,c_0)$, we see that
	\begin{eqnarray}\label{eq15}
		&&\left\langle V(u),u\right\rangle\geq c_3||u||^p-c_4\ \mbox{for some}\ c_3,c_4>0,\nonumber\\
		&\Rightarrow&V(\cdot)\ \mbox{is strongly coercive (recall that $p>1$)}.
	\end{eqnarray}
	
	Then the claim and (\ref{eq15}) permit the use of Proposition \ref{prop1}. So, we can find $u_{\epsilon}\in W^{1,p}(\Omega),u_{\epsilon}\neq 0$ (since $e\neq 0$) such that
	\begin{eqnarray}\label{eq16}
		&&V(u_{\epsilon})=0\ \mbox{in}\ W^{1,p}(\Omega)^*\nonumber\\
		&\Rightarrow&\left\langle A(u_{\epsilon}),h\right\rangle+\int_{\partial\Omega}\beta(z)|u_{\epsilon}|^{p-2}u_{\epsilon}hd\sigma-\int_{\Omega}(u^-_{\epsilon})^{p-1}hdz\nonumber\\
		&&=\int_{\Omega}f(z,u_{\epsilon},Du_{\epsilon})hdz+\epsilon\int_{\Omega}ehdz\ \mbox{for all}\ h\in W^{1,p}(\Omega).
	\end{eqnarray}
	
	In (\ref{eq16}) we choose $h=-u^-_{\epsilon}\in W^{1,p}(\Omega)$ and use (\ref{eq4}) and hypothesis $H(\beta)$. Then
	\begin{eqnarray*}
		&&||Du^-_{\epsilon}||^p_p+||u^-_{\epsilon}||^p_p\leq 0\ (\mbox{recall that}\ e\in D_+),\\
		&\Rightarrow&u_{\epsilon}\geq 0,\ u_{\epsilon}\neq 0.
	\end{eqnarray*}
	
	Then from (\ref{eq16}) we have
	\begin{eqnarray}\label{eq17}
		&&\left\langle A(u_{\epsilon}),h\right\rangle+\int_{\partial\Omega}\beta(z)u^{p-1}_{\epsilon}hd\sigma=\int_{\Omega}f(z,u_{\epsilon},Du_{\epsilon})hdz+\epsilon\int_{\Omega}ehdz\ \mbox{for all}\ h\in W^{1,p}(\Omega)\nonumber\\
		&\Rightarrow&-\Delta_pu_{\epsilon}(z)=f(z,u_{\epsilon}(z),Du_{\epsilon}(z))+\epsilon e(z)\ \mbox{for almost all}\ z\in\Omega,\nonumber\\
		&&\frac{\partial u_{\epsilon}}{\partial n_p}+\beta(z)u^{p-1}_{\epsilon}=0\ \mbox{on}\ \partial\Omega\ (\mbox{see Papageorgiou and R\u adulescu \cite{10}}).
	\end{eqnarray}
	
	By Winkert \cite{14} and Papageorgiou and R\u adulescu \cite{11}, we have
	$$u_{\epsilon}\in L^{\infty}(\Omega).$$
	
	Applying Theorem 2 of Lieberman \cite{8}, we obtain
	$$u_{\epsilon}\in C_+\backslash\{0\}.$$
	
	Let $M=||u_{\epsilon}||_{C^1(\overline{\Omega})}$. Hypotheses $H(f)(i),(iii)$ imply that we can find $\hat{\xi}_M>0$ such that
	$$f(z,x,y)+\hat{\xi}_Mx^{p-1}\geq 0$$
	for almost all $z\in\Omega$, all $x\in[0,M]$, and all $|y|\leq M$.
	
	Using this in (\ref{eq17}), we have
	\begin{eqnarray*}
		&&\Delta_pu_{\epsilon}(z)\leq\hat{\xi}_Mu_{\epsilon}(z)^{p-1}\ \mbox{for almost all}\ z\in\Omega,\\
		&\Rightarrow&u_{\epsilon}\in D_+
	\end{eqnarray*}
	(by the nonlinear strong maximum principle, see \cite[p. 738]{4}).
\end{proof}

Next, we show that for some $\mu\in(0,1)$ and all $0<\epsilon\leq 1$, we have $u_{\epsilon}\in C^{1,\mu}(\overline{\Omega})$  and
$$\{u_{\epsilon}\}_{0<\epsilon\leq 1}\subseteq C^{1,\mu}(\overline{\Omega})\ \mbox{is bounded}.$$

Using this fact and letting $\epsilon\rightarrow 0^+$, we will generate a positive solution for problem (\ref{eq1}).
\begin{prop}\label{prop5}
If hypotheses $H(\beta),H(f)$ hold, then there exist $\mu\in(0,1)$ and $c^*>0$ such that for all $0<\epsilon\leq 1$ we have
$$u_{\epsilon}\in C^{1,\mu}(\overline{\Omega})\ \mbox{and}\ ||u_{\epsilon}||_{C^{1,\mu}(\overline{\Omega})}\leq c^*.$$
\end{prop}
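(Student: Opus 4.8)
The plan is to establish first an $L^\infty$-bound for the family $\{u_\epsilon\}_{0<\epsilon\le 1}$ which is independent of $\epsilon$, and then to invoke the global regularity theory of Lieberman in a quantitative form to upgrade this to a uniform $C^{1,\mu}(\overline{\Omega})$-bound. The $L^\infty$-bound is the heart of the matter. Recall from Proposition \ref{prop4} that each $u_\epsilon\in D_+$ satisfies the weak formulation
\begin{equation*}
	\left\langle A(u_\epsilon),h\right\rangle+\int_{\partial\Omega}\beta(z)u_\epsilon^{p-1}h\,d\sigma=\int_\Omega f(z,u_\epsilon,Du_\epsilon)h\,dz+\epsilon\int_\Omega eh\,dz
\end{equation*}
for all $h\in W^{1,p}(\Omega)$. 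The first step is to test with $h=u_\epsilon$ and use the growth estimate \eqref{eq14}, exactly as in the coercivity computation \eqref{eq15}: since $\vartheta\not\equiv\hat\lambda_1$ with $\vartheta\le\hat\lambda_1$, Lemma \ref{lem3} yields (after absorbing the $\epsilon\|u_\epsilon^+\|^p$ term, using $0<\epsilon\le 1$ and, say, $e$ fixed in $D_+$ so that $\|\epsilon e\|_{p'}\le\|e\|_{p'}$) an inequality of the form $c_3\|u_\epsilon\|^p\le c_4\|u_\epsilon\|+c_5$ with constants independent of $\epsilon$. Hence $\{u_\epsilon\}_{0<\epsilon\le 1}$ is bounded in $W^{1,p}(\Omega)$.

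The second step is to pass from the $W^{1,p}$-bound to an $L^\infty$-bound with a constant independent of $\epsilon$. Here the delicate point is the gradient dependence of $f$: the right-hand side $f(z,u_\epsilon,Du_\epsilon)$ is not controlled by $u_\epsilon$ alone. The remedy is the observation, already used twice in the paper (citing Winkert \cite{14} and Papageorgiou--R\u adulescu \cite{11}), that for a Robin problem of the form $-\Delta_p u+ (\text{lower order}) = g$ with $g\in L^{r}(\Omega)$ for $r$ large enough, one obtains $u\in L^\infty(\Omega)$ with a bound depending only on $\|g\|_r$ and $\|u\|_{W^{1,p}}$; crucially, by $H(f)(i)$ we have $|f(z,u_\epsilon,Du_\epsilon)+\epsilon e|\le a(z)[1+u_\epsilon^{p-1}+|Du_\epsilon|^{p-1}]+\|e\|_\infty$, and one runs the Moser/De Giorgi iteration (or the $L^\infty$-estimate of Winkert) treating the $|Du_\epsilon|^{p-1}$ term by Young's inequality so that it is reabsorbed into the principal part up to a multiple of $\int|Du_\epsilon|^p$, which is already bounded by Step~1. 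The outcome is $\|u_\epsilon\|_\infty\le c_6$ for all $0<\epsilon\le 1$ with $c_6$ independent of $\epsilon$.

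The third and final step is regularity. Now that $M_0:=\sup_{0<\epsilon\le 1}\|u_\epsilon\|_\infty<\infty$, hypothesis $H(f)(i)$ gives $|f(z,u_\epsilon,Du_\epsilon)+\epsilon e|\le a(z)[1+M_0^{p-1}+|Du_\epsilon|^{p-1}]+\|e\|_\infty$, so the right-hand sides fit the structural conditions of the global $C^{1,\alpha}$-regularity theorem of Lieberman \cite{8} (Theorem~2 there, exactly as invoked for the eigenfunctions and for $u_\epsilon$ individually in Proposition \ref{prop4}): the problem $-\Delta_p u_\epsilon = \tilde f_\epsilon(z)$ with $\tilde f_\epsilon$ having the admissible growth $|\tilde f_\epsilon(z)|\le C(1+|Du_\epsilon|^{p-1})$ and $|u_\epsilon|\le M_0$, together with the Robin condition under $H(\beta)$ ($\beta\in C^{0,\alpha}(\partial\Omega)$, $\beta\ge0$), yields $u_\epsilon\in C^{1,\mu}(\overline{\Omega})$ for some $\mu\in(0,1)$ and, inspecting the statement of that theorem, the $C^{1,\mu}$-norm is estimated purely in terms of $N,p,\Omega$, the Hölder and sup norms of $\beta$, the constant $C$, and $M_0$ — none of which depend on $\epsilon$. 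This gives the claimed uniform bound $\|u_\epsilon\|_{C^{1,\mu}(\overline{\Omega})}\le c^*$.

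The main obstacle is Step~2: extracting an $\epsilon$-independent $L^\infty$-bound in the presence of the convection term $|Du|^{p-1}$. One must be careful that the Young-inequality splitting of $a(z)|Du_\epsilon|^{p-1}h$ produces a term $\delta\int_\Omega |Du_\epsilon|^p\,dz$ whose coefficient can be taken small but whose value is controlled only by Step~1 (not reabsorbed into the iteration), so the iteration inequality must be set up to tolerate a fixed additive constant at each level; alternatively one quotes the $L^\infty$-estimate of \cite{14} in a form already engineered to handle gradient terms of critical growth. Once $M_0$ is in hand, Step~3 is essentially a citation, provided one checks that Lieberman's estimate is stated (or can be read off) with an explicit dependence of the constant on the data rather than merely asserting finiteness for each fixed equation.
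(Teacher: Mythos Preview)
Your proposal is correct and follows essentially the same three-step route as the paper: test with $h=u_\epsilon$ and invoke Lemma~\ref{lem3} to get a uniform $W^{1,p}$-bound, cite Winkert \cite{14} and Papageorgiou--R\u adulescu \cite{11} for a uniform $L^\infty$-bound, then apply Lieberman \cite{8} for the uniform $C^{1,\mu}$-estimate. The only cosmetic difference is that the paper rederives the growth estimate more carefully as $f(z,x,y)x\le(\vartheta(z)+\epsilon)x^p+c_6|y|^{p-1}+c_6$ (its (21)) rather than reusing \eqref{eq14}, so the $|Du_\epsilon|^{p-1}$ contribution appears explicitly already in Step~1 and yields $(c_0-\epsilon)\|u_\epsilon\|^p\le c_8[\|u_\epsilon\|^{p-1}+1]$; your version absorbs this into the constant via \eqref{eq15}, which is fine.
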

\begin{proof}
	Let $\epsilon\in\left(0,1\right]$ and let $u_{\epsilon}\in D_+$ be a solution of (\ref{eq5}) produced in Proposition \ref{prop4}. We have
	\begin{eqnarray}\label{eq18}
		&&\left\langle A(u_{\epsilon}),h\right\rangle+\int_{\partial\Omega}\beta(z)u^{p-1}_{\epsilon}hd\sigma=\int_{\Omega}f(z,u_{\epsilon},Du_{\epsilon})hdz+\epsilon\int_{\Omega}ehdz\ \mbox{for all}\ h\in W^{1,p}(\Omega).
	\end{eqnarray}
	
	Hypothesis $H(f)(ii)$ implies that given $\epsilon>0$, we can find $M_1=M_1(\epsilon)>0$ such that
	\begin{equation}\label{eq19}
		f(z,x,y)x\leq(\vartheta(z)+\epsilon)x^p\ \mbox{for almost all}\ z\in\Omega,\ \mbox{all}\ x\geq M_1,\ \mbox{and all}\ y\in\RR^N.
	\end{equation}
	
	Also, hypothesis $H(f)(i)$ implies that
	\begin{eqnarray}\label{eq20}
		&&f(z,x,y)x\leq c_5(1+|y|^{p-1})\ \mbox{for almost all}\ z\in\Omega,\nonumber\\
		&& \mbox{all}\ 0\leq x\leq M_1,\ \mbox{all}\ y\in\RR^N,\ \mbox{some}\ c_5>0.
	\end{eqnarray}
	
	Then from (\ref{eq19}), (\ref{eq20}) and since $\vartheta\in L^{\infty}(\Omega)$, it follows that
	\begin{eqnarray}\label{eq21}
		&&f(z,x,y)x\leq(\vartheta(z)+\epsilon)x^p+c_6|y|^{p-1}+c_6\ \mbox{for almost all}\ z\in\Omega,\nonumber\\
		&& \mbox{all}\ x\geq 0,\ \mbox{all}\ y\in\RR^N,\ \mbox{and for some}\ c_5>0.
	\end{eqnarray}
	
	In (\ref{eq18}) we choose $h=u_{\epsilon}\in W^{1,p}(\Omega)$. We obtain
	\begin{eqnarray*} &&||Du_{\epsilon}||^p_p+\int_{\partial\Omega}\beta(z)u^p_{\epsilon}d\sigma\leq\int_{\Omega}[\vartheta(z)+\epsilon]u^p_{\epsilon}dz+c_7[||Du_{\epsilon}||^{p-1}_p+||u_{\epsilon}||+1]\\
		&&\mbox{ for some}\ c_7>0,\\
		&\Rightarrow&||Du_{\epsilon}||^p_p+\int_{\partial\Omega}\beta(z)u^p_{\epsilon}d\sigma-\int_{\Omega}\vartheta(z)u^p_{\epsilon}dz-\epsilon||u_{\epsilon}||^p\leq c_8[||u_{\epsilon}||^{p-1}+1]\ \mbox{for some}\ c_8>0,\\
		&\Rightarrow&[c_0-\epsilon]||u_{\epsilon}||^p\leq c_8[||u_{\epsilon}||^{p-1}+1]\ (\mbox{see Lemma \ref{lem3}}).
	\end{eqnarray*}
	
	Choosing $\epsilon\in(0,c_0)$, we infer that
	\begin{equation}\label{eq22}
		\{u_{\epsilon}\}_{0<\epsilon\leq 1}\subseteq W^{1,p}(\Omega)\ \mbox{is bounded}.
	\end{equation}
	
	From (\ref{eq18}) we have
	\begin{equation}\label{eq23}
		\left\{\begin{array}{ll}
			-\Delta_pu_{\epsilon}(z)=f(z,u_{\epsilon}(z),Du_{\epsilon}(z))+\epsilon e(z)\ \mbox{for almost all}\ z\in\Omega,&\\
			\frac{\partial u_{\epsilon}}{\partial n_p}+\beta(z)u^{p-1}_{\epsilon}=0\ \mbox{on}\ \partial\Omega&
		\end{array}\right\}
	\end{equation}
	(see Papageorgiou and R\u adulescu \cite{10}).
	
	From (\ref{eq22}), (\ref{eq23}) and Winkert \cite{14} (see also Papageorgiou and R\u adulescu \cite{11}), we see that we can find $c_9>0$ such that
	$$||u_{\epsilon}||_{\infty}\leq c_9\ \mbox{for all}\ 0<\epsilon\leq 1.$$
	
	Invoking Theorem 2 of Lieberman \cite{8}, we know that there exist $\mu\in(0,1)$ and $c^*>0$ such that
	$$u_{\epsilon}\in C^{1,\mu}(\overline{\Omega})\ \mbox{and}\ ||u_{\epsilon}||_{C^{1,\mu}(\overline{\Omega})}\leq c^*\ \mbox{for all}\ \epsilon\in\left(0,1\right].$$
This completes the proof.
\end{proof}
Now letting $\epsilon\rightarrow 0^+$, we will produce a positive solution for problem (\ref{eq1}).
\begin{theorem}\label{th6}
	If hypotheses $H(\beta),H(f)$ hold, then problem (\ref{eq1}) has a positive solution $\hat{u}\in D_+$.
\end{theorem}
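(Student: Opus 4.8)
The plan is to pass to the limit in the family $\{u_\epsilon\}_{0<\epsilon\le 1}\subseteq C^{1,\mu}(\overline\Omega)$ produced in Proposition \ref{prop5}, exploiting the compact embedding $C^{1,\mu}(\overline\Omega)\hookrightarrow C^1(\overline\Omega)$. Since $\|u_\epsilon\|_{C^{1,\mu}(\overline\Omega)}\le c^*$ uniformly, we can choose a sequence $\epsilon_n\to 0^+$ and $\hat u\in C^1(\overline\Omega)$ such that $u_{\epsilon_n}\to\hat u$ in $C^1(\overline\Omega)$; in particular $u_{\epsilon_n}\to\hat u$ and $Du_{\epsilon_n}\to D\hat u$ uniformly on $\overline\Omega$. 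Because each $u_{\epsilon_n}\in D_+$, the limit $\hat u$ satisfies $\hat u\ge 0$.

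Next I would pass to the limit in the weak formulation \eqref{eq18}. Fix $h\in W^{1,p}(\Omega)$. Uniform $C^1$-convergence gives $|Du_{\epsilon_n}|^{p-2}Du_{\epsilon_n}\to|D\hat u|^{p-2}D\hat u$ uniformly, hence $\langle A(u_{\epsilon_n}),h\rangle\to\langle A(\hat u),h\rangle$; similarly $u_{\epsilon_n}^{p-1}\to\hat u^{p-1}$ uniformly on $\partial\Omega$, so the boundary term converges to $\int_{\partial\Omega}\beta(z)\hat u^{p-1}h\,d\sigma$. For the reaction term, the Carath\'eodory property of $f$ together with the uniform convergence $u_{\epsilon_n}(z)\to\hat u(z)$, $Du_{\epsilon_n}(z)\to D\hat u(z)$ gives $f(z,u_{\epsilon_n}(z),Du_{\epsilon_n}(z))\to f(z,\hat u(z),D\hat u(z))$ for a.a.\ $z\in\Omega$, while the growth bound $H(f)(i)$ with the uniform $C^1$-bound yields an $L^\infty(\Omega)$ domination, so by dominated convergence $\int_\Omega f(z,u_{\epsilon_n},Du_{\epsilon_n})h\,dz\to\int_\Omega f(z,\hat u,D\hat u)h\,dz$. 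Finally $\epsilon_n\int_\Omega eh\,dz\to 0$. Therefore $\langle A(\hat u),h\rangle+\int_{\partial\Omega}\beta(z)\hat u^{p-1}h\,d\sigma=\int_\Omega f(z,\hat u,D\hat u)h\,dz$ for all $h\in W^{1,p}(\Omega)$, which by the nonlinear Green's identity (Papageorgiou and R\u adulescu \cite{10}) means $\hat u$ is a nonnegative solution of \eqref{eq1} lying in $C^1(\overline\Omega)$.

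The one genuine obstacle is to rule out $\hat u\equiv 0$, since the limit of positive functions could degenerate. This is exactly where hypothesis $H(f)(iii)$ and the first eigenvalue come in. I would argue by contradiction: if $u_{\epsilon_n}\to 0$ in $C^1(\overline\Omega)$, then for $n$ large $\|u_{\epsilon_n}\|_{C^1(\overline\Omega)}\le M$ for some fixed $M$, so $H(f)(iii)$ provides $\eta_M\in L^\infty(\Omega)$ with $\eta_M\ge\hat\lambda_1$, $\eta_M\not\equiv\hat\lambda_1$, and, given $\delta>0$, an index beyond which $f(z,u_{\epsilon_n},Du_{\epsilon_n})\ge(\eta_M(z)-\delta)u_{\epsilon_n}^{p-1}$ a.e.\ (using $0<u_{\epsilon_n}(z)$ small). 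Testing \eqref{eq18} with the normalized eigenfunction $\hat u_1\in D_+$, or better testing the equation for $\hat u_1$ with $u_{\epsilon_n}$ and the equation for $u_{\epsilon_n}$ with $\hat u_1$ and subtracting (a Picone-type / nonlinear integration-by-parts argument, as advertised in the keywords), one gets
\[
\hat\lambda_1\int_\Omega\hat u_1 u_{\epsilon_n}^{p-1}\,dz+\text{(nonnegative terms)}\ \ge\ \int_\Omega(\eta_M(z)-\delta)\hat u_1 u_{\epsilon_n}^{p-1}\,dz+\epsilon_n\int_\Omega e\hat u_1\,dz,
\]
and since $\epsilon_n\int_\Omega e\hat u_1\,dz>0$, choosing $\delta$ small enough that $\eta_M-\delta\ge\hat\lambda_1$ on a set of positive measure and $\not\le\hat\lambda_1$ globally forces a strict inequality contradicting the eigenvalue identity — this contradiction shows $\hat u\ne 0$. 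Once $\hat u\ge 0$, $\hat u\ne 0$, and $\hat u\in C^1(\overline\Omega)$ solves \eqref{eq1}, a final application of the nonlinear strong maximum principle (using $H(f)(i),(iii)$ to absorb the reaction into $\hat\xi_M\hat u^{p-1}$ exactly as at the end of the proof of Proposition \ref{prop4}) upgrades $\hat u$ to $D_+$, completing the proof.
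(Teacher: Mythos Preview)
Your overall architecture matches the paper's: extract a $C^1(\overline\Omega)$-limit $\hat u$ of the $u_{\epsilon_n}$ using Proposition~\ref{prop5}, pass to the limit in \eqref{eq18}, exclude $\hat u\equiv 0$ via $H(f)(iii)$ and the first eigenfunction, then apply the strong maximum principle. The limit-passage and the final step are fine (indeed more carefully written than in the paper).

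The gap is in your argument for $\hat u\not\equiv 0$. Testing the equation for $u_{\epsilon_n}$ with $\hat u_1$ and the eigenvalue equation with $u_{\epsilon_n}$ and subtracting does \emph{not} produce a remainder of definite sign when $p\neq 2$: the $p$-Laplacian is not symmetric, so $\langle A(u_{\epsilon_n}),\hat u_1\rangle-\langle A(\hat u_1),u_{\epsilon_n}\rangle$ has no reason to be nonnegative, and your displayed inequality (with ``nonnegative terms'') is unjustified. Moreover, even granting that inequality, both sides contain the factor $u_{\epsilon_n}^{p-1}\to 0$, so comparing them against $\epsilon_n\int_\Omega e\hat u_1\,dz$ would require rate information you do not have; your contradiction does not close.

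What the paper actually does---and what ``Picone'' really means here---is to test \eqref{eq23} with $h=\hat u_1^{\,p}/u_n^{p-1}$ (legitimate since $u_n\in D_+$). The Allegretto--Huang Picone identity gives
\[
0\ \le\ \int_\Omega R(\hat u_1,u_n)\,dz
\ =\ \|D\hat u_1\|_p^p-\Big\langle A(u_n),\ \hat u_1^{\,p}/u_n^{p-1}\Big\rangle,
\]
so $\big\langle A(u_n),\hat u_1^{\,p}/u_n^{p-1}\big\rangle\le\|D\hat u_1\|_p^p$. The crucial point of this particular test function is that in the reaction term the factor $u_n^{p-1}$ \emph{cancels}: using $H(f)(iii)$ one gets $f(z,u_n,Du_n)\,\hat u_1^{\,p}/u_n^{p-1}\ge(\eta_M(z)-\epsilon)\hat u_1^{\,p}$, a quantity independent of $n$. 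Combining with the variational characterization of $\hat\lambda_1$ and $\|\hat u_1\|_p=1$ yields
\[
0\ \le\ \int_\Omega\bigl[\hat\lambda_1-\eta_M(z)\bigr]\hat u_1^{\,p}\,dz+\epsilon,
\]
which for $\epsilon$ small is strictly negative because $\hat u_1\in D_+$ and $\eta_M\ge\hat\lambda_1$, $\eta_M\not\equiv\hat\lambda_1$. That is the contradiction. Replace your cross-testing paragraph with this Picone computation and the proof is complete.
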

\begin{proof}
	Let $\{\epsilon_n\}_{n\geq 1}\subseteq\left(0,1\right]$ and assume that $\epsilon_n\rightarrow 0^+$. We set $u_n=u_{\epsilon_n}$ for all $n\in\NN$. On account of Proposition \ref{prop5} and since $C^{1,\mu}(\overline{\Omega})$ is embedded compactly into $C^1(\overline{\Omega})$, by passing to a subsequence if necessary, we may assume that
	\begin{equation}\label{eq24}
		u_n\rightarrow \hat{u}\ \mbox{in}\ C^1(\overline{\Omega})\ \mbox{as}\ n\rightarrow\infty.
	\end{equation}
	
	Suppose that $\hat{u}=0$. Let $M=\sup\limits_{n\geq 1}||u_n||_{C^1(\overline{\Omega})}$. Hypothesis $H(f)(iii)$ implies that given $\epsilon>0$, we can find $\delta=\delta(\epsilon)>0$ such that
	\begin{equation}\label{eq25}
		f(z,x,y)\geq[\eta_M(z)-\epsilon]x^{p-1}\ \mbox{for almost all}\ z\in\Omega,\ \mbox{and all}\ 0\leq x\leq \delta,\ \mbox{all}\ |y|\leq M.
	\end{equation}	
	
	Consider the function $$R(\hat{u}_1,u_n)(z)=|D\hat{u}_1(z)|^p-|Du_n(z)|^{p-2}(Du_n(z),D\left(\frac{\hat{u}_1^p}{u_n^{p-1}}\right)(z))_{\RR^N}.$$
	
	By
	the nonlinear Picone
	 identity of Allegretto and Huang \cite{1}, we have
	\begin{eqnarray}\label{eq26}
		&0&\leq\int_{\Omega}R(\hat{u}_1,u_n)dz\nonumber\\
		&&=||D\hat{u}_1||^p_p-\int_{\Omega}|Du_n|^{p-2}(Du_n,D\left(\frac{\hat{u}_1^p}{u_n^{p-1}}\right))_{\RR^N}dz\nonumber\\
		&&=||D\hat{u}_1||^p_p-\int_{\Omega}(-\Delta_pu_n)\left(\frac{\hat{u}_1^p}{u_n^{p-1}}\right)dz+\int_{\partial\Omega}\beta(z)u_n^{p-1}\frac{\hat{u}^p_1}{u_n^{p-1}}d\sigma\nonumber\\
		&&\mbox{(by the nonlinear Green's identity, see Gasinski and Papageorgiou \cite[ p. 211]{4})}\nonumber\\
		&&=||D\hat{u}_1||^p_p+\int_{\partial\Omega}\beta(z)\hat{u}_1^{p-1}d\sigma-\int_{\Omega}f(z,u_n,Du_n)\frac{\hat{u}_1^p}{u_n^{p-1}}dz-\epsilon_n\int_{\Omega}e\frac{\hat{u}_1^p}{u_n^{p-1}}dz\nonumber\\
		&&(\mbox{see (\ref{eq23}) with}\ u_{\epsilon}\ \mbox{replaced by}\ u_n)\nonumber\\
		&&\leq\hat{\lambda}_1-\int_{\Omega}\eta_M(z)u_n^{p-1}\frac{\hat{u}_1^p}{u_n^{p-1}}dz+\epsilon\ \mbox{for all}\ n\geq n_0\nonumber\\
		&&(\mbox{see (\ref{eq25}), (\ref{eq24}) and recall that}\ \hat{u}=0\ \mbox{and}\ ||\hat{u}_1||_p=1)\nonumber\\
		&&=\hat{\lambda}_1-\int_{\Omega}\eta_M(z)\hat{u}_1^pdz+\epsilon\nonumber\\
		&&=\int_{\Omega}[\hat{\lambda}_1-\eta(z)]\hat{u}_1^pdz+\epsilon\ \mbox{for all}\ n\geq n_0\ (\mbox{recall that}\ ||\hat{u}_1||_p=1).
	\end{eqnarray}
	
	Let $\xi^*=\int_{\Omega}[\eta_M(z)-\hat{\lambda}_1]\hat{u}_1^pdz$. Since $\hat{u}_1\in D_+$, hypothesis $H(f)(iii)$ implies that
	$$\xi^*>0.$$
	
	Then from (\ref{eq26}) and by choosing $\epsilon\in(0,\xi^*)$ we have
	$$0\leq R(\hat{u}_1,u_n)<0\ \mbox{for all}\ n\geq n_0,$$
	a contradiction. So, $\hat{u}\neq 0$. Therefore, $\hat{u}\geq 0$ is a positive solution of (\ref{eq1}) and as before, via the nonlinear strong maximum principle, we have $\hat{u}\in D_+$.
\end{proof}

\medskip
{\bf Acknowledgements.} This research  was supported in part by  the  Slovenian  Research  Agency
grants P1-0292, J1-7025, J1-8131, and N1-0064. V.D.~R\u adulescu acknowledges the support through a grant  of the Romanian Ministry
of Research and Innovation, CNCS--UEFISCDI, project number PN-III-P4-ID-PCE-2016-0130,
within PNCDI III.


\begin{thebibliography}{99}

\bibitem{1} W. Allegretto, Y.X. Huang,  A Picone's identity for the $p$-Laplacian and applications, {\it Nonlinear Anal.} {\bf 32} (1998), 819-830.

\bibitem{2} F. Faraci, D. Motreanu, D. Puglisi, Positive solutions of quasilinear elliptic equations with dependence on the gradient, {\it Calc. Var.} {\bf 54} (2015), 525-538.

\bibitem{3} D. de Figueiredo, M. Girardi, M. Matzeu, Semilinear elliptic equations with dependence on the gradient via mountain-pass techniques, {\it Diff. Integral Equations} {\bf 17} (2004), 119-126.

\bibitem{4} L. Gasinski, N.S. Papageorgiou, {\it Nonlinear Analysis}, Chapman \& Hall/CRC, Boca Raton, FL, 2006.

\bibitem{5} L. Gasinski, N.S. Papageorgiou, Positive solutions for nonlinear elliptic problems with dependence on the gradient, {\it J. Differential Equations} {\bf 263} (2017), 1451-1476.

\bibitem{6} M. Girardi, M. Matzeu, Positive and negative solutions of a quasilinear elliptic equation by a mountain pass method and truncature techniques, {\it Nonlinear Anal.} {\bf 59} (2004), 199-210.

\bibitem{7} N.B. Huy, B.T. Quan, N.H. Khanh, Existence and multiplicity results for generalized logistic equations, {\it Nonlinear Anal.} {\bf 144} (2016), 77-92.

\bibitem{8} G. Lieberman, Boundary regularity for solutions of degenerate elliptic equations, {\it Nonlinear Anal.} {\bf 12} (1988), 1203-1219.

\bibitem{9} D. Mugnai, N.S. Papageorgiou, Resonant nonlinear Neumann problems with indefinite weight, {\it Ann. Sc. Norm. Super. Pisa Cl. Sci.} (5) {\bf 11} (2012), no. 4, 729-788.

\bibitem{10} N.S. Papageorgiou, V.D. R\u adulescu, Multiple solutions with precise sign information for nonlinear parametric Robin problems, {\it J. Differential Equations} {\bf 256} (2014), 393-430.

\bibitem{11} N.S. Papageorgiou, V.D. R\u adulescu, Nonlinear nonhomogeneous Robin problems with superlinear reaction term, {\it Adv. Nonlinear Studies} {\bf 16} (2016), 737-764.

\bibitem{12} N.S. Papageorgiou, V.D. R\u adulescu, D.D. Repov\v{s},  Nonlinear elliptic inclusions with unilateral constraint and dependence on the gradient, {\it Appl. Math. Optim.},  to appear (DOI: 10.1007/s00245-016-9392-y).

\bibitem{13} D. Ruiz, A priori estimates and existence of positive solutions for strongly nonlinear problems, {\it   J. Differential Equations} {\bf 199} (2004), 96-114.

\bibitem{14} P. Winkert, $L^{\infty}$-estimates for nonlinear elliptic Neumann boundary value problems, {\it Nonlin. Diff. Equations Appl. (NoDEA)} {\bf 17} (2010), 289-302.


\end{thebibliography}
\end{document}